\newtheorem{theorem}{Theorem}
\newtheorem{proposition}{Proposition}
\numberwithin{equation}{section}
\newenvironment{proof}[1][Proof]{\noindent \textbf{#1.} }{\ \quad \rule{0.5em}{0.5em}}
\begin{document}

\title{A decomposition of  the bifractional Brownian motion and some applications}
\author{Pedro Lei, David Nualart\thanks{  D. Nualart is  supported    by
  the NSF grant   DMS0604207.}  \\
Department of Mathematics \\
University of Kansas \\
Lawrence, Kansas, 66045 USA}
\date{}
\maketitle

\begin{abstract}
In this paper we show a decomposition of the bifractional Brownian motion with parameters $H,K$ into the sum of a fractional Brownian motion with Hurst parameter $HK$ plus a stochastic process with absolutely continuous trajectories. Some applications of this decomposition are discussed.
\end{abstract}
\maketitle

 \section{Introduction}

The  \textit{bifractional Brownian motion} is a generalization of the fractional Brownian motion,   defined as a    centered Gaussian process $B^{H,K}=(  B_t^{H,K},  t \ge 0 )$,   with covariance
\begin{equation} \label{cov1}
R^{H,K}(t,s) =2^{-K} ((t^{2H} + s^{2H})^{K} - |t-s|^{2HK}),
\end{equation}
where $H \in (0,1)$ and $K \in (0,1]$. 
Note that, if $K=1$ then $B^{H,1}$ is a fractional Brownian motion with Hurst parameter $H \in (0,1)$, and we denote this process by $B^{H}$.
Some properties of the bifractional Brownian motion have been studied in \cite{CJ2003}, and \cite{FC2006}. In particular, in \cite{FC2006} the authors show that the bifractional Brownian motion behaves as a fractional Brownian motion with Hurst parameter $HK$.
The stochastic calculus with respect to the bifractional Brownian motion has been recently developed in the references  \cite{KRT} and \cite{ET}.

The purpose of this note is to show a decomposition of the bifractional Brownian motion as the sum of a fractional Brownian motion with Hurst parameter $HK$ plus a process with absolutely continuous trajectories. This decomposition  leads to a better understanding, and   simple proofs  of some of the properties of the bifractional Brownian motion that have been obtained in the literature.

\setcounter{equation}{0}

\section{Preliminaries}

 Suppose that   $B^{H,K}$ is a bifractional Brownian motion with covariance (\ref{cov1}).
The following properties have been proved in \cite{CJ2003} and summarized in \cite{FC2006}.
\begin{enumerate}
\item[(i)] The  bifractional Brownian motion with parameters $(H,K)$  is $HK$-self-similar, that is, for any $a>0$, the processes $( a^{-HK}B^{H,K}_{at}, t\ge 0 )$ and $( B^{H,K}_{t},  t\ge 0 )$ have
the same distribution. This is an immediate consequence of the fact that the covariance function is homogeneous of order $2HK$.
\item[(ii)]   For every $s,t \in [0,\infty)$, we have
\begin{equation}
2^{-K}|t-s|^{2HK} \le \mathbb{E}\left( (B^{H,K}_t - B^{H,K}_s)^2  \right) \le 2^{1-K}|t-s|^{2HK}.
\end{equation}
This inequality shows that the process $B^{H,K}$ is a quasi-helix in the sense of J.P.Kahane \cite{J1981, J1985}.  Applying  Kolmogorov's continuity criterion, it follows that  $B^{H,K}$ has a version with H\"older continuous  trajectories  of order $\delta$ for any $ \delta <HK$. 
 \end{enumerate}
Note that the bifractional Brownian motion does not have stationary increments, except in the case $K=1$.

It turns out that the bifractional Brownian motion is related to some stochastic partial differential equations. For example, suppose that $(u(t,x), t\ge 0, x\in \mathbb{R})$ is the solution of the one-dimensional stochastic heat equation on $\mathbb{R}$ with initial condition $u(0,x)=0$
\[
\frac {\partial u}{\partial t}= \frac 12 \frac{\partial^2 u}{\partial x^2} + \frac {\partial ^2W}{\partial t \partial x},
\]
where $W=\{ W(t,x), t\ge 0, x\in \mathbb{R} \}$ is a two-parameter Wiener process.  In other words, $W$ is a centered Gaussian process with covariance
\[
E(W(t,x)W(s,y))= (t\wedge s) (|x| \wedge |y|).
\]
Then, for any $x\in \mathbb{R}$, the process $(u(t,x), t\ge 0)$ is a bifractional Brownian motion with parameters $H=K=\frac 12$, multiplied by the constant $(2\pi)^{\frac 14} 2^{-\frac 18}$. In fact,
\[
u(t,x)= \int_0^t \int_\mathbb{R} p_{t-s}(x-y) W(ds,dy),
\]
where $p_{t}(x)=\frac{1}{\sqrt{ 2 \pi}} e^{-\frac{x^2}{2}}$, and the covariance of $u(t,x)$ is given by
\begin{eqnarray*}
\mathbb{E} (u(t,x)u(s,x))&=& \int_0^{t \wedge s} \int_\mathbb{R} p_{t-r}(x-y) p_{s-r}(x-y) dydr \\
&=&  \int_0^{t \wedge s}  p_{t+s-2r}(0)dr \\
&=& \frac 1{\sqrt{2\pi}} (\sqrt{t+s} -\sqrt{|t-s|}).
\end{eqnarray*}.

\setcounter{equation}{0}
\section{A decomposition of the bifractional Brownian motion}
 Consider the following  decomposition of the covariance function of the bifractional Brownian motion:
\begin{eqnarray}  \notag
R^{H,K}(t,s)& =& \frac{1}{2^K} [(t^{2H} + s^{2H})^K - t^{2HK} - s^{2HK}]   \\
&& + \frac{1}{2^{K}}[t^{2HK} + s^{2HK} - |t-s|^{2HK}].  \label{cov2}
\end{eqnarray}
The second summand in the
  above equation is the covariance of a fractional Brownian motion with Hurst parameter $HK$.   The first summand turns out to be non-positive definite and with a change of sign it will be the covariance of a   Gaussian process. In order to define this process, consider
  a    standard Brownian motion $(W_{\theta}, \theta \ge 0)$. For any $0<K <1$, define the process  $X^{K} = (  X^{K}_t, t\ge 0)$ by
\begin{equation}
  X^{K}_t=\int_{0}^{\infty}(1-e^{-\theta t}) \theta^{-\frac{1+K}{2}} dW_{\theta}.  \label{3.2}
\end{equation}
Then, 
$X^{K}$ is a centered Gaussian process with covariance:
\begin{eqnarray}
\gamma^{K}(t,s) = \mathbb{E} [X^{K}_t X^{K}_s]  \notag
&=&  \int_{0}^{\infty}(1-e^{-\theta t})(1-e^{-\theta s}) \theta^{-1-K} d\theta\\
&=& \frac{\Gamma(1-K)}{K}[t^{K}+s^{K}-(t+s)^{K}].  \label{cov}
\end{eqnarray}
In this way we obtain the following result.
\begin{proposition} \label{p1} 
Let $B^{H,K}$ be a bifractional Brownian motion, and suppose that $(W_{\theta}, \, \theta \ge 0)$ is a Brownian motion independent of $B^{H,K}$.   Let $X^K$ be the process defined in  (\ref{3.2}). Set $X^{H,K}_t= X^K_{t^{2H}}$.
Then,   the processes 
$(C_{1} X^{H,K} _t+B^{H,K}_t  , t\ge 0)$ and $(C_{2}B_t^{HK}, t\ge 0)$ have the same distribution, where
  $C_{1} =  \sqrt{\frac{2^{-K}K}{\Gamma(1-K)}}$ and $C_{2} = 2^{\frac{1-K}{2}}$. 
\end{proposition}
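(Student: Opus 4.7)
The plan is to show the stated equality in distribution by verifying that both processes are centered Gaussian with identical covariance functions; equality of finite-dimensional distributions then follows automatically. The left-hand side is Gaussian because $X^{H,K}_t = X^K_{t^{2H}}$ is a deterministic time change of the Wiener integral (\ref{3.2}) against $W$, which by hypothesis is independent of $B^{H,K}$. By this independence, the covariance of $C_1 X^{H,K}_t + B^{H,K}_t$ is
\[
C_1^2\, \gamma^K(t^{2H}, s^{2H}) + R^{H,K}(t,s),
\]
so the whole computation reduces to a single algebraic identity.

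The next step is to substitute the explicit formula (\ref{cov}) for $\gamma^K$ evaluated at the pair $(t^{2H}, s^{2H})$, yielding
\[
\gamma^K(t^{2H}, s^{2H}) = \frac{\Gamma(1-K)}{K}\bigl[t^{2HK} + s^{2HK} - (t^{2H}+s^{2H})^K\bigr].
\]
The constant $C_1^2 = 2^{-K}K/\Gamma(1-K)$ is chosen precisely so that the factor $\Gamma(1-K)/K$ cancels, and $C_1^2 \gamma^K(t^{2H},s^{2H})$ becomes exactly the negative of the first bracket in the decomposition (\ref{cov2}) of $R^{H,K}$. Adding it to $R^{H,K}(t,s)$ cancels that bracket and leaves only the second, namely $2^{-K}[t^{2HK}+s^{2HK}-|t-s|^{2HK}]$, which equals $C_2^2 = 2^{1-K}$ times the standard fractional Brownian covariance $\tfrac{1}{2}(t^{2HK}+s^{2HK}-|t-s|^{2HK})$ of $B^{HK}$. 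This matches the covariance of the right-hand side, completing the proof.

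There is no genuine obstacle once the decomposition (\ref{cov2}) is in hand: the auxiliary process $X^{H,K}$ and the constants $C_1, C_2$ are engineered so that one line of algebra closes the argument. The only points deserving a moment's care are of a preparatory nature and would be handled before this calculation: first, well-definedness of the Wiener integral (\ref{3.2}), which amounts to checking that $\theta\mapsto (1-e^{-\theta t})\theta^{-(1+K)/2}$ is square integrable on $(0,\infty)$ for each $t>0$ and $K\in(0,1)$ (the integrand behaves like $\theta^{(1-K)/2}$ near $0$ and like $\theta^{-(1+K)/2}$ near infinity, both square integrable in the required sense); and second, the closed-form evaluation of $\gamma^K$ in (\ref{cov}), which follows by expanding the product $(1-e^{-\theta t})(1-e^{-\theta s})$ and applying the standard identity $\int_0^\infty (1-e^{-c\theta})\theta^{-1-K}\,d\theta = \frac{\Gamma(1-K)}{K}\,c^K$ for $c>0$ and $K\in(0,1)$.
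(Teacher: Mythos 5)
Your proposal is correct and follows essentially the same route as the paper: both verify that the left-hand process is centered Gaussian and compute its covariance via independence, using the cancellation of $\Gamma(1-K)/K$ by $C_1^2$ to reduce to the decomposition (\ref{cov2}) and identify the result as $C_2^2$ times the fractional Brownian covariance. Your additional remarks on the square-integrability of the integrand in (\ref{3.2}) and the evaluation of $\gamma^K$ are sound but are already handled by the paper in the discussion preceding the proposition.
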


\begin{proof}
Let $Y_{t} = C_{1} X^{H,K}_t+B^{H,K}_t$. Then,  from (\ref{cov2}) and (\ref{cov})  for $s,t\ge 0$ we have
\begin{eqnarray*}
\mathbb{E}(Y_{s}Y_{t}) 
&=& C_{1}^{2} \mathbb{E}(X^{K}_{s^{2H}}X^{K}_{t^{2H}}) + \mathbb{E}(B^{H,K}_sB^{H,K}_t) \\
&=& \frac{1}{2^{K}}(t^{2HK}+s^{2HK}-(t^{2H}+s^{2H})^{K}) + \frac{1}{2^{K}} ((t^{2H} + s^{2H})^{K} - |t-s|^{2HK}) \\
&=& \frac{1}{2^{K}}(t^{2HK} + s^{2HK} - |t-s|^{2HK}),  
\end{eqnarray*}
which completes the proof.
\end{proof}

The next result provides some regularity properties for the process $X^K$.
\begin{theorem} \label{t1}
 The process $X^{K}$  has a version with trajectories which are infinitely differentiable trajectories on $(0,\infty)$ and absolutely continuous on $[0,\infty)$.   
 \end{theorem}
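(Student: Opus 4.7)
The plan is to formally differentiate $X^K$ under the Wiener integral sign and then justify this rigorously via a stochastic Fubini argument. For each integer $n\ge 1$ and $t>0$, I would consider the candidate $n$-th derivative
\[
 Y^{(n)}_t = (-1)^{n+1}\int_{0}^{\infty}\theta^{n-\frac{1+K}{2}} e^{-\theta t}\, dW_{\theta},
\]
which is a well-defined Wiener integral because $\int_{0}^{\infty}\theta^{2n-(1+K)}e^{-2\theta t}\, d\theta<\infty$ whenever $t>0$ (the exponent $2n-(1+K)>-1$ for $n\ge 1$ since $K<1$).

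First I would show that, on any compact subinterval $[a,b]\subset(0,\infty)$, the process $Y^{(n)}$ admits a continuous (indeed smooth) version: one computes $\mathbb{E}(Y^{(n)}_t-Y^{(n)}_s)^2$ as a Gaussian integral in $\theta$, checks that it is $O(|t-s|^{2})$ uniformly on $[a,b]$, and then Kolmogorov's continuity criterion applies.

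The core step is the identity
\[
 Y^{(n-1)}_t - Y^{(n-1)}_s = \int_{s}^{t} Y^{(n)}_u\, du, \qquad 0<s<t,
\]
with the convention $Y^{(0)}=X^{K}$. This is proved by a stochastic Fubini theorem: both sides are centered Gaussian random variables realized as Wiener integrals against $dW_{\theta}$, and one verifies that the $\theta$-kernels coincide. For $n=1$, for instance, the left-hand kernel is $(e^{-\theta s}-e^{-\theta t})\theta^{-(1+K)/2}$, while interchanging the $du$ and $dW_\theta$ integrals on the right produces $\bigl(\int_s^t \theta e^{-\theta u}\,du\bigr)\theta^{-(1+K)/2}$, which is the same. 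The integrability hypothesis needed for Fubini, namely $\int_s^t\bigl(\int_0^\infty \theta^{2n-(1+K)}e^{-2\theta u}\,d\theta\bigr)^{1/2} du<\infty$, holds because the inner integral equals $c\, u^{-(n-K/2)}$ and $s>0$. Combining this identity with the continuity of $Y^{(n)}$ on $(0,\infty)$ and the fundamental theorem of calculus gives, by induction on $n$, the infinite differentiability of $X^{K}$ on $(0,\infty)$ with $(X^K)^{(n)}=Y^{(n)}$.

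For absolute continuity on $[0,\infty)$, the key observation is the sharp bound
\[
 \bigl(\mathbb{E}(Y^{(1)}_u)^2\bigr)^{1/2}=\Bigl(\int_{0}^{\infty}\theta^{1-K}e^{-2\theta u}\,d\theta\Bigr)^{1/2}=c_K\, u^{-1+K/2},
\]
which is integrable near $0$ since $K>0$. By Fubini, $\int_{0}^{T}|Y^{(1)}_u|\,du<\infty$ almost surely for each $T>0$. Since $\mathbb{E}(X^{K}_s)^2=\tfrac{2\Gamma(1-K)}{K}s^{K}\to 0$ as $s\to 0^+$, letting $s\to 0^+$ in the identity $X^{K}_t-X^{K}_s=\int_{s}^{t}Y^{(1)}_u\,du$ yields $X^{K}_t=\int_{0}^{t}Y^{(1)}_u\,du$ for all $t\ge 0$, giving the absolute continuity of the trajectories on $[0,\infty)$.

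The main obstacle is the careful justification of the stochastic Fubini interchange together with the integrability of the derivative near $t=0$; the smoothness on $(0,\infty)$ is routine once the candidate derivatives are identified, but the endpoint behavior at zero is where the condition $K>0$ enters in an essential way.
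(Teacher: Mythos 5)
Your proof is correct and follows essentially the same route as the paper: introduce the candidate derivative $Y_t=\int_0^\infty \theta^{\frac{1-K}{2}}e^{-\theta t}\,dW_\theta$, note that $\bigl(\mathbb{E}[Y_s^2]\bigr)^{1/2}\propto s^{\frac{K}{2}-1}$ is integrable at the origin, and apply the stochastic Fubini theorem to identify $X^K_t=\int_0^t Y_s\,ds$, with the analogous Wiener integrals $(-1)^{n-1}\int_0^\infty\theta^{\,n-\frac{1+K}{2}}e^{-\theta t}\,dW_\theta$ giving the higher derivatives on $(0,\infty)$. (The only quibble is the constant in $\mathbb{E}[(X^K_s)^2]$, which equals $\frac{\Gamma(1-K)}{K}(2-2^K)s^K$ rather than $\frac{2\Gamma(1-K)}{K}s^K$, but this plays no role in the argument.)
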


\begin{proof}
Note that $\mathbb{E}[(X^{K}_t)^{2}] = C_{3}t^{K}$, where $C_{3}$ = $\frac{\Gamma(1-K)}{K}(2-2^{K})$.
For any $t>0$, define $Y_{t} = \int_{0}^{\infty} \theta^{\frac{1-K}{2}} e^{- \theta t} \, dW_{\theta}$.  This integral exists because 
$$
\mathbb{E}[Y_{t}^{2}] = \int_{0}^{\infty}{ \theta^{1-K} e^{-2 \theta t} d\theta } = \Gamma(K) 2^{K-2} t^{K-2} .
$$
Applying Fubini's theorem and Cauchy-Schwartz inequality, we have:
\[
\mathbb{E}\left(\int_{0}^{t} |Y_{s}| ds\right)
= \sqrt{\frac 2 \pi} \int_{0}^{t}  \sqrt{\mathbb{E}[|Y_{s}|^{2}]} ds 
= \sqrt{ \Gamma(K)\frac 2 \pi}  2^{\frac K2-1}  \int_{0}^{t}  \, s^{\frac{K-2}{2}}ds
< \infty.
\]
 On the other hand, applying stochastic Fubini's theorem, we have:
\begin{eqnarray*}
\int_{0}^{t} Y_{s} ds
&=& \int_{0}^{t} (\int_{0}^{\infty} \theta^{\frac{1-K}{2}} e^{- \theta s} \, dW_{\theta} \,) ds \\
&=& \int_{0}^{\infty}  \theta^{\frac{1-K}{2}} (\int_{0}^{t}  e^{- \theta s} \,  ds \,) dW_{\theta} \\ 
&=& \int_{0}^{\infty}  \theta^{-\frac{1+K}{2}} ( 1 -e^{- \theta t}) \, dW_{\theta} \\
&=& X^{K}(t).
\end{eqnarray*}
This implies that $X^{K}$ is absolutely continuous and $Y_{t} = (X_t^{K})' $ on $(0,\infty)$. Similarly, the $n^{th}$ derivative of $X^{K}$  exists on $(0,\infty)$  and it is  given by 
\[
(X^K_t )^{(n)}= \int_{0}^{\infty} (-1)^{n-1}(\theta)^{n-\frac{1}{2}-\frac{K}{2}} e^{-\theta t} \, dW_{\theta}.
\]
\end{proof}

The next proposition provides some information about the behavior of $X^K$ at the origin.
 
\begin{proposition}
There exists a nonnegative random variable $G(\omega)$ such that for all $t\in [0,1]$
\[
|X^K_t| \le G(\omega)  \sqrt{t^K \log\log (t^{-1})}.
\]
\end{proposition}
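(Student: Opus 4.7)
The plan is to establish this LIL-type bound through $K/2$-self-similarity of $X^K$, Gaussian concentration for the supremum on a fixed dyadic interval, and the Borel--Cantelli lemma.

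First, I would observe from \eqref{cov} that $\gamma^K(at,as)=a^K\gamma^K(t,s)$ for every $a>0$, so $X^K$ is $K/2$-self-similar: the Gaussian processes $(X^K_{at})_{t\ge 0}$ and $(a^{K/2}X^K_t)_{t\ge 0}$ have the same covariance. Setting $t_n=2^{-n}$ and
\[
M_n \;:=\; \sup_{t\in[t_{n+1},t_n]}|X^K_t|,
\]
which is well-defined and finite thanks to the path continuity supplied by Theorem \ref{t1}, the substitution $t=t_n s$ yields
\[
M_n \;\stackrel{d}{=}\; t_n^{K/2}\,M^{\ast}, \qquad M^{\ast}\;:=\;\sup_{s\in[1/2,1]}|X^K_s|.
\]

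Next, since $(X^K_s)_{s\in[1/2,1]}$ is a continuous centered Gaussian process with
\[
\sigma^2 \;:=\; \sup_{s\in[1/2,1]}\mathbb{E}[(X^K_s)^2] \;=\; C_3,
\]
Borell's inequality furnishes the Gaussian tail bound
\[
\mathbb{P}(M^{\ast}>u) \;\le\; 2\exp\!\Bigl(-\tfrac{(u-\mathbb{E}[M^{\ast}])^2}{2C_3}\Bigr), \qquad u\ge \mathbb{E}[M^{\ast}].
\]
Fixing any $\lambda>\sqrt{2C_3}$ and using $\log\log(t_n^{-1})=\log(n\log 2)$, one checks that
\[
\sum_{n\ge 1}\mathbb{P}\!\left(M_n>\lambda\sqrt{t_n^K\,\log\log(t_n^{-1})}\right) \;=\; \sum_{n\ge 1}\mathbb{P}\!\left(M^{\ast}>\lambda\sqrt{\log\log(t_n^{-1})}\right)
\]
is dominated by a constant multiple of $\sum_n n^{-\lambda^2/(2C_3)}$ and hence converges.

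Borel--Cantelli therefore produces an a.s.\ finite index $N(\omega)$ such that $M_n\le \lambda\sqrt{t_n^K\log\log(t_n^{-1})}$ for all $n\ge N(\omega)$. For $t\in(0,t_{N(\omega)}]$, choose $n\ge N(\omega)$ with $t\in[t_{n+1},t_n]$; the elementary estimates $t_n\le 2t$ and the monotonicity $\log\log(t_n^{-1})\le \log\log(t^{-1})$ then give
\[
|X^K_t| \;\le\; M_n \;\le\; \lambda\, 2^{K/2}\sqrt{t^K\,\log\log(t^{-1})}.
\]
On the remaining compact portion of $(0,1]$ bounded away from $0$, the process $X^K$ is bounded by path continuity, so the residual finite supremum can be absorbed into the random constant $G(\omega)$.

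The main obstacle is the clean application of Borell's inequality to certify that $M^{\ast}$ has a Gaussian tail with the correct variance proxy $\sigma^2=C_3$; once that is in place, what remains is the bookkeeping typical of the upper half of a law of the iterated logarithm, combining self-similarity, Borel--Cantelli, and monotonicity of $\log\log$.
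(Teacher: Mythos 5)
Your argument is correct, but it is genuinely different from the one in the paper. The paper works pathwise: an integration by parts turns the Wiener integral into $X^K_t=\int_0^\infty \varphi(\theta,t)W_\theta\,d\theta$ with $\varphi(\theta,t)=te^{-\theta t}\theta^{-\frac{1+K}{2}}-\frac{1+K}{2}\theta^{-\frac{3+K}{2}}(1-e^{-\theta t})$, and then the law of the iterated logarithm for the underlying Brownian motion $W$ (near $0$ and near $\infty$) is fed into the three pieces $\int_0^{t_0}+\int_{t_0}^{t_1}+\int_{t_1}^\infty$, the first two contributing $O(t)$ and the last the dominant $t^{K/2}\sqrt{\log\log t^{-1}}$ term after the change of variables $\eta=\theta t$. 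You instead exploit the $K/2$-self-similarity of $X^K$ (correctly read off from the homogeneity of $\gamma^K$), reduce the dyadic block suprema $M_n$ to a single copy $M^\ast$ on $[1/2,1]$, and run Borell--TIS plus Borel--Cantelli; the convergence of $\sum_n n^{-\lambda^2/(2C_3)}$ up to the subpolynomial correction $e^{c\sqrt{\log n}}$ coming from the $\mathbb{E}[M^\ast]$ cross-term is handled correctly by taking $\lambda>\sqrt{2C_3}$, and the passage from dyadic points to all small $t$ via $t_n\le 2t$ and monotonicity of $\log\log$ is sound. Your route is softer and more portable --- it needs only continuity, self-similarity and the variance bound, not the explicit integral representation --- at the price of a less explicit random constant; the paper's route gives the bound directly for all small $t$ with a constant built from the random LIL times $t_0,t_1$, and generalizes less readily. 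One caveat shared by both arguments (and really by the statement itself): $\log\log(t^{-1})$ is nonpositive or undefined for $t\ge e^{-1}$, so the inequality must be read for $t$ in a neighborhood of $0$, or with $\log\log$ truncated below by a positive constant; your remark about absorbing the supremum over the remaining compact set into $G(\omega)$ only works under such a reading.
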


\begin{proof}
Applying an integration by parts  yields
\[
X^K_t = \int_0^\infty \varphi(\theta,t) W_\theta d\theta, \\
\]
where
\[
\varphi(\theta,t)=t e^{-\theta t} \theta^{-\frac{1+K}{2}}-\frac{1+K}{2}\theta^{-\frac{3+K}{2}}(1-e^{-\theta t}). 
\]
By the law of iterated logarithm for the Brownian motion, given $c>1$ we can find
two random points $0<t_0 <t_1$, with $t_0 < \frac 1e, t_1 >e$,  such that almost surely, for all $\theta\le t_0$ 
\[
|W_\theta| \le   c \sqrt{2\theta \log \log \theta^{-1}}
\]
and for all $\theta \ge t_1$
\[
|W_\theta| \le   c \sqrt{2\theta \log \log \theta }. 
\]
Then we make the decomposition
\[
X^K_t= \int_0^{t_0} W_\theta \varphi(\theta,t) d\theta + \int_{t_0}^{t_1} W_\theta \varphi(\theta,t) d\theta +\int_{t_1}^\infty W_\theta \varphi(\theta,t) d\theta.
\]
For the first term we obtain
\[
\left|\int_0^a W_\theta \varphi(\theta,t) d\theta   \right|
\le c \int_0^{t_0}   \sqrt{2\theta \log \log \theta^{-1}} |\varphi(\theta,t)| d\theta
\le G_1 t,
\]
for some  nonnegative random variable $G_1$. Similarly, we can show  that
\[
\left
|\int_{t_0}^{t_1} W_\theta \varphi(\theta,t) d\theta  \right| \le G_2 t.
\]
With the change of variables $\eta = \theta t$ the third term can be bounded as follows
\begin{eqnarray*}
\left| \int_{t_1} ^\infty  W_\theta \varphi(\theta,t) d\theta   \right|
\le  c \int_{t_1}^\infty   \sqrt{2\theta \log \log \theta }  |\varphi (\theta, t)| d\theta \\
\le 2\sqrt{2} c  t^{\frac{K}{2}} \int_{t_1}^\infty \sqrt{\log \log \frac{\eta}{t}} ( \eta^{-1-\frac{K}{2}}   + \eta^{-2- \frac K2}) d\eta.
\end{eqnarray*}
Applying the inequality
\[
\log (\log |\eta| + \log t^{-1}) \le \log 2 + \log |\log \eta| + \log \log t^{-1}
\] 
we obtain
\[
\left| \int_{t_1} ^\infty  W_\theta \varphi(\theta,t) d\theta   \right|
\le  G_3  t^{\frac{K}{2}} \sqrt{\log \log t^{-1}}.
\]
This completes the proof. 
\end{proof}

\setcounter{equation}{0}
\section{Applications}
We first describe the space of integrable functions with respect to the bifractional Brownian motion.

Suppose that $X=(X_t, t\in [0,T])$ is a continuous zero mean Gaussian process.  Denote by $\mathcal{E}$ the set of step functions on $[0,T]$. Let $\mathcal{H}_{X}$ be the Hillbert space defined as the closure of $\mathcal{E}$ with respect to the scalar product
\[
\langle \textbf{1}_{[0,t]},\textbf{1}_{[0,s]} \rangle_{\mathcal{H}}=\mathbb{E}(X_tX_s).
\]
The mapping $\textbf{1}_{[0,t]} \rightarrow X_t$ can be extend to a linear isometry between $\mathcal{H}_{X}$ and the Gaussian space $H_{1}(X)$ associated with $X$. We will denote this isometry by $\varphi$ $\to$ $X(\varphi)$. The problem is to find $\mathcal{H}_{X}$ for a particular process $X$.

In the case of the standard Brownian motion $B$, the space $\mathcal{H}_{B}$ is $L^2([0,T])$. For the fractional Brownian motion $B^H$ with Hurst parameter $H\in (0,\frac 12)$ it is known (see  \cite{DU}) that $\mathcal{H}_{B^H}$ coincides with the fractional Sobolev space $I^{\frac 12-H}_{0+}(L^2([0,T]))$. In the case $H >\frac 12 $, the space $\mathcal{H}_{B^H}$ contains distributions, according to the work by Pipiras and Taqqu 
\cite{PT}.  In a recent work,   Jolis \cite{Jo} has proved that if $H>\frac 12$, the space $\mathcal{H}_{B^H}$  is the set of restrictions to the space of smooth functions  $\mathcal{D}(0,T)$ of the distributions of $W^{1/2-H,2}(\mathbb{R})$ with support contained in $[0,T]$.

For the bifractional Brownian motion we can prove the following result. 
As before, we denote by  $X^{H,K}_t$ the process $X^K_{t^{2H}}$.

\begin{proposition} \label{p2}
For $H \in (0,1)$ and $K \in (0,1]$, the equality $\mathcal{H}_{X^{H,K}} \cap \mathcal{H}_{B^{H,K}} = \mathcal{H}_{B^{HK}}$ holds.
\end{proposition}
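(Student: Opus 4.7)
The plan is to reduce the claimed set equality to a norm identity on step functions, inherited from Proposition \ref{p1}. Taking covariances on both sides of the distributional identity in Proposition \ref{p1}, and using the independence of $X^{H,K}$ and $B^{H,K}$, one obtains
\[
C_2^2\, \mathbb{E}(B^{HK}_s B^{HK}_t) = C_1^2\, \mathbb{E}(X^{H,K}_s X^{H,K}_t) + \mathbb{E}(B^{H,K}_s B^{H,K}_t),
\]
which by bilinearity extends to
\[
C_2^2\, \langle \varphi,\psi\rangle_{\mathcal{H}_{B^{HK}}} = C_1^2\, \langle \varphi,\psi\rangle_{\mathcal{H}_{X^{H,K}}} + \langle \varphi,\psi\rangle_{\mathcal{H}_{B^{H,K}}} \qquad (\ast)
\]
for all step functions $\varphi,\psi$ on $[0,T]$.

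For the inclusion $\mathcal{H}_{B^{HK}} \subseteq \mathcal{H}_{X^{H,K}}\cap\mathcal{H}_{B^{H,K}}$, I would take $\varphi \in \mathcal{H}_{B^{HK}}$ and approximate it by step functions $\varphi_n \to \varphi$ in $\|\cdot\|_{\mathcal{H}_{B^{HK}}}$. Applying $(\ast)$ to $\varphi_n-\varphi_m$ and using the nonnegativity of both summands on the right, the sequence $(\varphi_n)$ is also Cauchy in $\mathcal{H}_{X^{H,K}}$ and in $\mathcal{H}_{B^{H,K}}$; its limits there are identified with $\varphi$ through the natural embedding of step functions into each completion.

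The reverse inclusion is the main difficulty. Given $\varphi \in \mathcal{H}_{X^{H,K}}\cap\mathcal{H}_{B^{H,K}}$, I would use Proposition \ref{p1} to identify $H_1(B^{HK})$, the isometric image of $\mathcal{H}_{B^{HK}}$, with $H_1(C_1 X^{H,K} + B^{H,K})$. Since $X^{H,K}$ and $B^{H,K}$ are independent, $H_1(X^{H,K})$ and $H_1(B^{H,K})$ are orthogonal in $L^2$, and $C_1 X^{H,K}(\varphi) + B^{H,K}(\varphi)$ is a well-defined element of their orthogonal sum. A step-function approximation of $\varphi$ that converges in both individual norms, combined with $(\ast)$, shows that this random variable is the $L^2$-limit of $C_2 B^{HK}(\varphi_n)$, hence lies in $H_1(B^{HK})$; pulling back through the isometry identifies $\varphi$ with an element of $\mathcal{H}_{B^{HK}}$. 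The technical obstacle is that $\mathcal{H}_{X^{H,K}}$ and $\mathcal{H}_{B^{H,K}}$ are abstract Hilbert-space completions, so the very meaning of their intersection requires a common ambient space (e.g., a space of distributions on $[0,T]$) in which step functions are coherently embedded; once such an ambient space is fixed and a single approximating sequence is available in both norms, $(\ast)$ extends to the intersection and delivers the conclusion.
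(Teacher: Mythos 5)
Your proposal is correct and follows essentially the same route as the paper: both derive the identity $C_1^2\|\varphi\|_{\mathcal{H}_{X^{H,K}}}^2+\|\varphi\|_{\mathcal{H}_{B^{H,K}}}^2=C_2^2\|\varphi\|_{\mathcal{H}_{B^{HK}}}^2$ on step functions from Proposition \ref{p1} and independence, and conclude the set equality from it. The paper simply states that the conclusion ``follows immediately,'' whereas you spell out the Cauchy-sequence argument and correctly flag the need for a coherent embedding of the completions; this is a useful elaboration rather than a different method.
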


\begin{proof}
For any step function $\varphi \in \mathcal{E}$   the following equality  is a consequence of the decomposition proved in Proposition \ref{p1}  and the independence of $X^{K}$ and $B^{H,K}$:
\begin{equation*}
C_{1}\mathbb{E}(| \int_0^T \varphi (t) dX_{t }^{H, K}|^{2}) + \mathbb{E}(| \int _0^T\varphi (t) dB_{t}^{H,K}|^{2}) = C_{2} \mathbb{E}(| \int_0^T \varphi (t) dB_{t}^{HK}|^{2}) \\
\end{equation*}
where $C_{1}$ and $C_{2}$ are positive constants. 
The equality $\mathcal{H}_{X^{H,K}} \cap \mathcal{H}_{B^{H,K}} = \mathcal{H}_{B^{HK}}$ follows immediately. 
\end{proof}

On the other hand,    for any step function  $\varphi  \in \mathcal{E}$ it holds that
\[ 
\mathbb{E}(X^{H,K} (\varphi)^2)\le  C_{H,K}  \left( \int_0^T |\varphi(t)| t^{HK-1} dt \right)^2,
\]
where $C_{H,K}$ is a constant depending only on $H$ and $K$. As a  consequence,  $L^1([0,T]; t^{HK-1} dt) \subset \mathcal{H}_{X^{H,K}}$.
 The proof is sketched as follows. By taking partial derivative of the covariance function $\gamma^{K}$ given in  (\ref{cov}) it follows that
\[
\frac{ \partial^2 \gamma^{K}   (s^{2H}, t^{2H})}{\partial s  \partial t} = C_{H,K} (t^{2H}+s^{2H})^{K-2} t^{2H-1} s^{2H-1}.
\]
for some constant $C_{H,K}$.
Then, for any  $\varphi \in \mathcal{E}$ it holds that
\begin{eqnarray*}
&& \int_0^T \int_0^T |\varphi(s) \varphi(t)| (st)^{2H-1} (t^{2H}+s^{2H})^{K-2} ds dt \\
&& \le  \int_0^T \int_0^T |\varphi(s) \varphi(t)| (st)^{2H-1} (s^{2H} t^{2H})^{\frac{K-2}{2}} ds dt \\
&& = \left( \int_0^T |\varphi(t)| t^{HK-1} dt \right)^2.
\end{eqnarray*}
By H\"older's inequality this implies that $L^p([0,T]) \subset \mathcal{H}_{X^{H,K}}$ for any $p>\frac 1 {HK}$. As a consequence, 
\[
L^1([0,T]; t^{HK-1} dt) \cap \mathcal{H}_{B^{H,K}} \subset \mathcal{H}_{X^{H,K}} \cap \mathcal{H}_{B^{H,K}} = \mathcal{H}_{B^{HK}}.
\]
In the case $HK<\frac 12$ this implies that a function in   $\mathcal{H}_{B^{H,K}}$ 
which is in  $L^1([0,T]; t^{HK-1} dt) $  must belong to the Sobolev space  $I^{\frac{1}{2}-HK}_{0^{+}}(L^2([0,T] ))$.

Consider now the   notion  of  $\alpha$-variations for a continuous process $X=(X_t, t\ge 0)$. The process $X$ admits an $\alpha$-\textit{variation}   if 
\begin{eqnarray}  
  V_{t}^{n, \alpha}(X)=\sum_{i=0}^{n-1} | \Delta X_{t_{i}}|^{\alpha} .  \label{var1}
\end{eqnarray} 
converges in probability as $n$ tends to infinity for all $t\ge 0$, where 
  $ t_i= \frac{it}n $   and  $\Delta X_{t_{i}}=X_{t_{i+1}}-X_{t_{i}}$.

 As a consequence of the Ergodic Theorem and the scaling property of the fractional Brownian motion, it is easy to show (see, for instance, \cite{Ro}) that the fractional Brownian motion with Hurst parameter $H\in (0,1)$ has an   $\frac 1H $-\textit{variation}  equals to $C_H t$, where $C_H = \mathbb{E}(|\xi|^H)$ and $\xi$ is a standard normal random variable. Then,  Proposition \ref{p1}  allows us to obtain the  $\frac 1{HK} $-\textit{variation} of bifractional Brownian motion. This provides a simple proof of a similar result in \cite{FC2006}.
  
\begin{proposition}
The bifractional Brownian motion with parameters $H$ and $K$ has a  $\frac 1{HK} $-\textit{variation} equals to $C_2^\frac{1}{HK} C_{HK}t$, where $C_{HK}=  \mathbb{E}(|\xi|^{HK})$ and $\xi$ is a standard normal random variable.
\end{proposition}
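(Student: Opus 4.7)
The plan is to exploit the decomposition in Proposition \ref{p1} to transfer the $\frac{1}{HK}$-variation computation from $B^{H,K}$ to the fractional Brownian motion $B^{HK}$, for which the answer is classical. Since the $\alpha$-variation is a distributional invariant (it is defined via convergence in probability of a functional of the finite-dimensional marginals), Proposition \ref{p1} lets me work directly with the identity $B^{H,K}_t \stackrel{d}{=} C_2 B^{HK}_t - C_1 X^{H,K}_t$, where on the right-hand side $X^{H,K}$ is independent of the fractional Brownian motion $B^{HK}$.

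First, I would observe that $\alpha := 1/(HK) > 1$ (since $H<1$ and $K\le 1$ imply $HK<1$), and recall the Minkowski-type inequality
\[
\bigl(V_t^{n,\alpha}(Y+Z)\bigr)^{1/\alpha} \;\le\; \bigl(V_t^{n,\alpha}(Y)\bigr)^{1/\alpha} + \bigl(V_t^{n,\alpha}(Z)\bigr)^{1/\alpha},
\]
together with its reverse form. This reduces the proof to two things: (a) computing the $\alpha$-variation of $C_2 B^{HK}$, and (b) showing that $V_t^{n,\alpha}(X^{H,K}) \to 0$ in probability.

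For (a), the classical result quoted just before the proposition gives that the fractional Brownian motion $B^{HK}$ has $\frac{1}{HK}$-variation equal to $C_{HK}\, t$, and by homogeneity multiplying by $C_2$ scales the variation by $C_2^{1/(HK)}$. For (b), I would invoke Theorem \ref{t1}: $X^K$ is absolutely continuous on $[0,\infty)$, and since the map $t\mapsto t^{2H}$ is continuous and monotone, the process $X^{H,K}_t = X^K_{t^{2H}}$ has continuous trajectories of bounded variation on every compact interval. For any continuous function $f$ of bounded variation on $[0,t]$ and any $\alpha>1$,
\[
\sum_{i=0}^{n-1} |f(t_{i+1})-f(t_i)|^\alpha \;\le\; \Bigl(\max_i |f(t_{i+1})-f(t_i)|\Bigr)^{\alpha-1} V(f;[0,t]),
\]
and the right-hand side vanishes by uniform continuity. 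Applying this pathwise to $X^{H,K}$ yields $V_t^{n,\alpha}(X^{H,K}) \to 0$ almost surely.

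Combining (a) and (b) through the Minkowski bound above gives $V_t^{n,\alpha}(B^{H,K}) \to C_2^{1/(HK)} C_{HK}\, t$ in probability, which is the claim. The only subtle step is (b) — specifically, confirming that absolute continuity of $X^K$ plus the time-change $t\mapsto t^{2H}$ really yields bounded variation of $X^{H,K}$ on compacts and allows the simple pathwise bound above; once this is in place, the rest is bookkeeping with the scaling constants.
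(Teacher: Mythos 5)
Your proposal is correct and follows essentially the same route as the paper's proof: the decomposition from Proposition \ref{p1}, the two-sided Minkowski bound, the classical $\frac{1}{HK}$-variation of the fractional Brownian motion, and the bound $\sum_i |\Delta X^{H,K}_{t_i}|^{\alpha} \le (\max_i |\Delta X^{H,K}_{t_i}|)^{\alpha-1} V(X^{H,K};[0,t])$ to kill the remainder term. The one point you flag as subtle --- that the monotone time change $t \mapsto t^{2H}$ carries the absolute continuity of $X^K$ into bounded variation of $X^{H,K}$ on compacts --- is indeed the only detail the paper leaves implicit, and it holds because a monotone continuous reparametrization preserves total variation.
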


\begin{proof}
Proposition \ref{p1} implies $B^{H,K}_t = C_2 B^{HK} _t - C_1 X^ {H,K}_t$. \\
Applying Minkowski's inequality, 
\[
\left( \sum_{i=0}^{n-1} | \Delta B_{t_{i}}^{H,K}|^{\frac{1}{HK}} \right)^{HK} \le 
C_2 \left( \sum_{i=0}^{n-1} | \Delta B_{t_{i}}^{HK}|^{\frac{1}{HK}} \right)^{HK} + C_1 \left( \sum_{i=0}^{n-1} | \Delta X_{t_{i} }^{H,K}|^{\frac{1}{HK}} \right)^{HK}.
\]
On the other hand, 
\[
C_2 \left( \sum_{i=0}^{n-1} | \Delta B_{t_{i}}^{HK}|^{\frac{1}{HK}} \right)^{HK} - C_1 \left( \sum_{i=0}^{n-1} | \Delta X_{t_{i} }^{H,K}|^{\frac{1}{HK}} \right)^{HK} \le
\left( \sum_{i=0}^{n-1} | \Delta B_{t_{i}}^{H,K}|^{\frac{1}{HK}} \right)^{HK}.
\]
 From the results for the fractional Brownian motion we know that
 \[
\lim_{n \to \infty} V_t^{n,\frac{1}{HK}} (B^{HK}) = C_{HK} t
\]
 almost surely and in $L^1$. To complete the proof, it is enough to show that $  \sum_{i=0}^{n-1} | \Delta X_{t_{i}  }^{H,K}|^{\frac{1}{HK}}  $ converges to zero. We can write
\[
\sum_{i=0}^{n-1} | \Delta X_{t_{i} }^{H,K}|^{\frac{1}{HK}}
\le    \sup_{i}  | \Delta X^{H,K} _{t_i}  |^{\frac{1}{HK}-1}  \sum_{i=0}^{n-1} | \Delta X_{t_{i} }^{H,K}|.
\]
The first factor in the above expression converges to zero by continuity, and the second factor is bounded by the total variation of $X^{H,K}$ on $[0,T]$ since $X^{H,K}$ is absolutely continuous on $[0,T]$ by Theorem \ref{t1}. The proof is complete.
 \end{proof}
 
Similarly,  for the    $\frac{1}{HK}$-\textit{strong variation} of the process $B^{H,K}$  
we can show that  
\[
\lim_{\varepsilon \to 0} \frac{1}{\varepsilon} \int_{0}^{t} |B^{H,K}_{s+\varepsilon} - B^{H,K}_{s}|^{\frac{1}{HK}} ds = C_2^{\frac{1}{HK}} C_{HK} t.
\]

\medskip
In  \cite{TX} the authors have proved the 
 Chung's law of the iterated logarithm   for the bifractional Brownian motion:
 \begin{equation}
\liminf_{r \to 0} \frac {\max_{t \in [0,r]} |B_{t+t_0}^{H,K}-B_{t_0}^{H,K}|} {r^{HK} / (\log \log (1/r))^{HK}} = C_0(HK),
\end{equation}
where $C_0$ is a positive and finite constant depneding on HK, for all $t_0 \ge 0$. 
A similar result for the fractional Brownian motion was obtained by Monrad and  Root\'zen in \cite{MR}. 
The decomposition obtained in this paper allows us deduce   Chung's law of iterated logarithm  for $t_0>0$ for the bifractional Brownian motion, from the same result for the fractional Brownian motion with Hurst parameter $HK$, with the same constant.   

Let us finally remark that the decomposition established in this paper permits to develop a stochastic calculus for the bifractional Brownian motion using the well-known results in the literature on the stochastic integration with respect to the fractional Brownian motion, and taking into account that the process $X^{H,K}$ has absolutely continuous trajectories.

\end{document}